\newtheorem{theorem}{Theorem}[section]
\newtheorem{corollary}{Corollary}[theorem]
\newtheorem{lemma}[theorem]{Lemma}
\newtheorem{proposition}[theorem]{Proposition}
\title{On the structure of the Schur complement matrix for the Stokes equation.}
\author{Vladislav Pimanov, Ekaterina Muravleva, Ivan Oseledets, Oleg Iliev}
\date{\today}
\begin{document}

\maketitle

\begin{abstract}

\end{abstract}

\section{Introduction}

In this paper, we investigate the structure of the Schur complement matrix for the fully-staggered finite-difference discretization of the stationary Stokes equation. Specifically, we demonstrate that the structure of the Schur complement matrix depends qualitatively on a particular characteristic, namely the number of non-unit eigenvalues, and the two limiting cases are of special interest. Actually, the number of non-unit eigenvalues of the Schur complement matrix is determined by the boundary conditions imposed. In the paper, we consider enclosed Stokes flows, which are determined by the no-penetration boundary condition imposed on the normal component of the velocity. Depending on the type of the boundary condition imposed on the tangential component of the velocity, two boundary value problems (BVPs) are considered: the first is the (enclosed) Dirichlet problem, and the second is the (enclosed) Neumann problem. The Dirichlet problem is considered as the primary problem of interest, and the Neumann problem is considered as an auxiliary one.

In the auxiliary Neumann case, several neat properties hold. Firstly, the discrete Helmholtz-Hodge orthogonal decomposition holds for the discrete curl and discrete divergence operators. Secondly, the velocity vector Laplacian matrix is decomposed into a direct sum under this decomposition. These properties imply that the Schur complement matrix for the Neumann case, as well as its inverse, up to a constant factor in the nullspace, equals the identity matrix acting on the discrete pressure space. Thus, the Stokes problem is reduced to the unconstrained vector Laplacian equation for the velocity in this case.

The main observation is that the Dirichlet case can be considered as a perturbation of this degenerate Neumann case. Namely, we demonstrate that the Schur complement matrix assembled for the Dirichlet problem, as well as its inverse, can be considered as a rank-$r$ correction of the Schur complement matrix assembled for the Neumann problem. Moreover, the rank $r$ is determined by the number of the velocity nodes affected by the tangential Dirichlet boundary condition, hence the dimensionality of the problem can be reduced by one. The main result is related to the case when all the velocity nodes are affected by the Dirichlet condition. In this limiting case, a particularly simple structure of the Schur complement matrix is observed, which implies several practical outcomes.

The paper is organized as follows. In Section \ref{sec:bvps}, we formulate the enclosed Dirichlet and Neumann BVPs for the Stokes equation. In Section \ref{sec:discretization}, we introduce the fully-staggered finite-difference discretization and describe how the underlying discrete operators can be assembled using the Kronecker matrix product. In Section \ref{sec:schur}, we describe the structure of the Schur complement matrix. Namely, we provide explicit formulas for the Dirichlet Schur complement matrix written as a rank-$r$ correction of the Neumann Schur complement matrix. Finally, in Section \ref{sec:limit}, we present the limiting case when the correction matrix has the full rank.

\section{Enclosed Stokes flows: Dirichlet and Neumann BVPs.}\label{sec:bvps}
Let us consider the Stokes equation formulated in a bounded open domain $\Omega \subset \mathbb{R}^2$:
\begin{equation}\label{stokes}
    \begin{split}
        -\Delta \mathbf{u} + \nabla p  &= \mathbf{0}\phantom{0}  \text{ in } \Omega, \\
        -\nabla \cdot \mathbf{u} &= 0\phantom{\mathbf{0}}  \text{ in } \Omega, \\
    \end{split}
\end{equation}
where $\mathbf{u} = (u,v)^T$ denotes the fluid velocity and $p$ is the fluid pressure.  Let us further denote $\mathbf{n}$  the outward unit normal of the boundary $\partial \Omega$. Then, one can uniqely write the following decomposition for the velocity vector field $\mathbf{u}$:
\begin{equation*}
    \mathbf{u} = \mathbf{u}_{\perp} + \mathbf{u}_{\parallel},
\end{equation*}
 where $\mathbf{u}_{\perp} = (\mathbf{n} \cdot \mathbf{u})\mathbf{n}$ and $\mathbf{u}_{\parallel} = \mathbf{u} - \mathbf{u}_{\perp}$ are the normal and tangential components.
 In the present paper, we study two boundary value problems (BVPs) for the Stokes equation \eqref{stokes}. For both problems, we firstly impose the \textit{no-penetration} condition on $\partial \Omega$, which is defined as follows: 
\begin{equation}\label{normal_bc}
        \mathbf{u}_{\perp\phantom{\parallel}} = \mathbf{0}\phantom{0}  \mathrm{\; on} \; \partial \Omega.
\end{equation}
It should be noted, that the boundary condition \eqref{normal_bc} imposed on the whole boundary $\partial \Omega$ characterizes the class of \textit{enclosed flow} problems \cite{elman2014finite} and is sufficient for the pressure solution to be unique only up to a constant factor.
In addition to the constraint \eqref{normal_bc} imposed on the normal velocity component, also the tangential component of either the velocity itself or the normal traction $(\partial \mathbf{u} / \partial \mathbf{n} - p \mathbf{n})$ must be prescribed, which results in: 
\begin{subequations}\label{tangential_bc}
     \begin{align}
      \mathbf{u}_{\parallel}  &= \mathbf{f}\phantom{0} \mathrm{\; on} \; \partial \Omega, \label{dirichlet_bc} \\
      \partial \mathbf{u}_{\parallel} / \partial \mathbf{n} &= \mathbf{f}\phantom{0} \mathrm{\; on} \; \partial \Omega, \label{neumann_bc}
     \end{align}
\end{subequations}
respectively.
Further in the paper, we will refer the Stokes equation \eqref{stokes} with the boundary conditions \eqref{normal_bc}+\eqref{dirichlet_bc} and \eqref{normal_bc}+\eqref{neumann_bc} as the \textit{(enclosed) Dirichlet} and the \textit{(enclosed) Neumann} BVPs, respectively.
Note, that for the case $\mathbf{f} = \mathbf{0}$ the boundary conditions \eqref{normal_bc}+\eqref{dirichlet_bc} and \eqref{normal_bc}+\eqref{neumann_bc} are known as the \textit{no-slip} and \textit{free-slip} conditions, respectively. Also, recall that the well-known lid-driven cavity problem is a special case of the just described enclosed Dirichlet problem. 

% In the first case, we deal with pure Dirichlet problem:
% \begin{equation}
%         \mathbf{u}_{\parallel\phantom{\perp}}  = \mathbf{f}\phantom{0} \mathrm{\; on} \; \partial \Omega,  
% \end{equation}
% where $\mathbf{u}_\perp = \mathbf{u} \cdot \mathbf{n}$ and $\mathbf{u}_\parallel = \mathbf{u} - \mathbf{u}_{\perp}$ denote the normal and tangential components of the velocity, and $\mathbf{n}$ is the normal. 

% \textit{enclosed flows} \cite{elman2014finite} constant nullspace

% However, we are interested in effect of no-slip boundary conditions, so we consider an auxiliary problem:
% If we prescribe only the normal component of the velocity, then we need to also prescribe the tangential component of the traction:
% \begin{equation}\label{free-slip-cavity}
%     \begin{split}
%         -\Delta \mathbf{u} + \nabla p  &= \mathbf{0}\phantom{0} \text{ in } \Omega, \\
%         -\nabla \cdot \mathbf{u} &= 0\phantom{\mathbf{0}} \text{ in } \Omega, \\
%         (\mathbf{u})_{\perp\phantom{\parallel}} &= \mathbf{0}\phantom{0} \mathrm{\; on} \; \partial \Omega, \\
%         % (\nabla \mathbf{u} \cdot \mathbf{n})_{\parallel\phantom{\perp}}  &= {0} \mathrm{\; on} \; \partial \Omega. \\
%         ((\nabla \mathbf{u} - pI) \cdot \mathbf{n})_{\parallel\phantom{\perp}}  &= {f}\phantom{0} \mathrm{\; on} \; \partial \Omega.
%     \end{split}
% \end{equation}

\section{Finite-difference discretization on fully-staggered grids.}\label{sec:discretization}
In what follows, we consider $\Omega$ to be a unit two-dimensional domain with the following tensor-product structure:
\begin{equation}\label{Omega}
    \Omega = \omega \times \omega \subset \mathbb{R}^2,
\end{equation}
where $\omega$ denotes a unit one-dimensional interval:
\begin{equation}\label{omega}
    \omega = (0,1) \subset \mathbb{R}.
\end{equation}
Given the problem size $\mathrm{n}$ and the corresponding grid size $h = 1/\mathrm{n}$, we consider the classical fully-staggered finite difference scheme \cite{harlow1965numerical}, for which the discretized velocities $\mathbf{u}_h = (u_h, v_h)^T$, the discretized pressure $p_h$, and the discretized velocity curl, denoted $q_h$, live on different grids. 
Namely, we discretize $\Omega$ from \eqref{Omega} with four different tensor-product grids:
\begin{equation}\label{Omega_discrete}
\begin{split}
    \Omega_h^{u} = \overline{\omega}_h \times {\omega}_h, \quad
    \Omega_h^{v} =  {\omega}_h \times \overline{\omega}_h, \\
    \Omega_h^{p} =  \overline{\omega}_h \times \overline{\omega}_h, \quad
    \Omega_h^{q} =  {\omega}_h \times {\omega}_h, \\
\end{split}
\end{equation}
where ${\omega}_h$ and $\overline{\omega}_h$ denote the aligned grid and the shifted (by $h/2$) grid, which are discretizations of $\omega$ from \eqref{omega}, given as follows:
\begin{equation}\label{omega_discrete}
    \omega_h = 
        (h, 1 - h; \; h), \quad
    \overline{\omega}_h = 
        (h/2, 1 - h/2; \; h).   
\end{equation}
For an illustration of the fully-staggered grids \eqref{Omega_discrete} for $\mathrm{n}=4$, see Fig. \ref{fig:grid_cavity}.
It is worth noting, that such fully-staggered discretization is known to be \textit{structure-preserving} in the sense that many fundamental structures of the continuous model, e.g. mass and momentum conservation laws, are preserved at the discrete level.
\begin{figure}
     \centering
         \includegraphics[width=.5\textwidth]{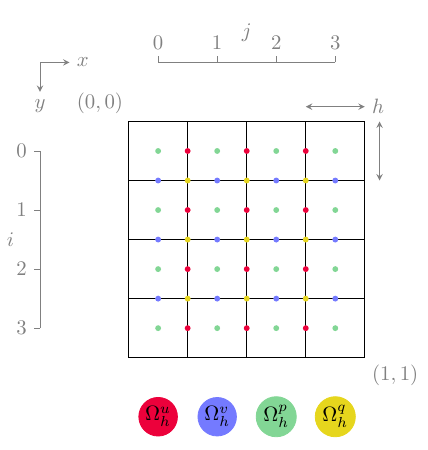}
         \caption{Fully-staggered grids for $\mathrm{n}=4$.}
         \label{fig:grid_cavity}
\end{figure}
Formally, we have the discrete functions belonging to different discrete spaces:
\begin{equation}\label{discrete_spaces}
\begin{split}
        \mathcal{U}_{h} &= \{u_h : \Omega_h^{u} \to \mathbb{R}\}, \quad
        \mathcal{V}_{h} = \{v_h : \Omega_h^{v} \to \mathbb{R}\}, \\
        \mathcal{P}_{h} &= \{p_h : \Omega_h^{p} \to \mathbb{R}\}, \quad
        \mathcal{Q}_{h} = \{q_h : \Omega_h^{q} \to \mathbb{R}\}. \\
\end{split}
\end{equation}
We identify these discrete spaces $\mathcal{U}_{h}$, $\mathcal{V}_{h}$, $\mathcal{P}_{h}$, and $\mathcal{Q}_{h}$ with the vector spaces $\mathbb{R}^{\mathrm{n}(\mathrm{n}-1)}$, $\mathbb{R}^{(\mathrm{n}-1)\mathrm{n}}$, $\mathbb{R}^{\mathrm{n}\mathrm{n}}$, and $\mathbb{R}^{(\mathrm{n}-1)(\mathrm{n}-1)}$, respectively. Then, the underlying two-dimensional discrete operators can be assembled from the one-dimensional operators  using the Kronecker matrix product (see, e.g., \cite{hackbusch2006low}).
For example, the identity operators $I^u:\mathcal{U}_h \to \mathcal{U}_h$, $I^v:\mathcal{V}_h \to \mathcal{V}_h$, $I^p:\mathcal{P}_h \to \mathcal{P}_h$, $I^q:\mathcal{Q}_h \to \mathcal{Q}_h$ are constructed using the Kronecker product as follows:
\begin{equation}\label{kron_identities}
    \begin{split}
        I^u &= I^{\overline{\omega}_h} \otimes I^{{\omega}_h}, \quad I^u \in \mathbb{R}^{\mathrm{n}(\mathrm{n}-1) \times \mathrm{n} (\mathrm{n}-1)}, \\
        I^v &= I^{{\omega}_h} \otimes I^{\overline{\omega}_h}, \quad I^v \in \mathbb{R}^{(\mathrm{n}-1)\mathrm{n} \times (\mathrm{n}-1) \mathrm{n}}, \\
        I^p &= I^{\overline{\omega}_h} \otimes I^{\overline{\omega}_h}, \quad I^p \in \mathbb{R}^{\mathrm{n}\mathrm{n} \times \mathrm{n} \mathrm{n}}, \\
        I^q &= I^{{\omega}_h} \otimes I^{{\omega}_h}, \quad I^q \in \mathbb{R}^{(\mathrm{n}-1)(\mathrm{n}-1) \times (\mathrm{n}-1) (\mathrm{n}-1)}, \\
    \end{split}
\end{equation}
where $I^{{\omega}_h}$ and $I^{\overline{\omega}_h}$ are the one-dimensional identity operators corresponding to the aligned grid $\omega_h$ and the shifted grid $\overline{\omega}_h$ from \eqref{omega_discrete}, respectively:
\begin{equation*}
    I^{{\omega}_h} = I^{\mathrm{n} - 1} \in \mathbb{R}^{(\mathrm{n} - 1) \times (\mathrm{n} - 1)}, \quad I^{\overline{\omega}_h} = I^{\mathrm{n}} \in \mathbb{R}^{\mathrm{n} \times \mathrm{n}}.
\end{equation*}
Finally, the velocity vector identity operator is composed as follows:
\begin{equation}\label{velocity_identity}
    {\mathbf{I^u}} = 
    \begin{bmatrix}
    I^{{u}} & \\
    & I^{{v}}
    \end{bmatrix}
    , \quad \mathbf{I^u}:(\mathcal{U}_h, \mathcal{V}_h)^T \to (\mathcal{U}_h, \mathcal{V}_h)^T.
    % , \quad {\mathbf{I^u}} \in \mathbb{R}^{(\mathrm{n}(\mathrm{n}-1)+ (\mathrm{n}-1)\mathrm{n}) \times (\mathrm{n}(\mathrm{n}-1) + (\mathrm{n}-1)\mathrm{n})}.
\end{equation}
After finite-difference discretization of the formulated in Section \ref{sec:bvps} enclosed Dirichlet and Neumann BVPs on fully-staggered grids \eqref{Omega_discrete}, we obtain a block system of the following saddle-point structure:
\begin{equation}\label{coupled_matrix}
\begin{bmatrix}
    \mathbf{A} & \mathbf{B}^T \\ 
    \mathbf{B} &
\end{bmatrix}
    \begin{bmatrix}
    \mathbf{u}_h \\ 
    p_h
    \end{bmatrix}
    =
    \begin{bmatrix}
    \mathbf{f}_h \\ 
     0
    \end{bmatrix}, 
\end{equation}
where $\mathbf{f}_h = (f^u_h, f^v_h)^T \in (\mathcal{U}_h, \mathcal{V}_h)^T$ is a discretization of the boundary term $\mathbf{f}$ from \eqref{tangential_bc}. The matrices $\mathbf{B}$, $\mathbf{B}^T$, and $\mathbf{A}$ represent the discrete counterparts of the velocity (negative) divergence, the pressure gradient, and the velocity vector Laplacian operators from the Stokes equation \eqref{stokes}, respectively. The boundary conditions \eqref{normal_bc} and \eqref{tangential_bc} are also incorporated into these matrices. It should be noted, that the matrix $\mathbf{B}$ depends solely on the normal boundary condition \eqref{normal_bc}, which makes it identical for both the Dirichlet and Neumann BVPs. However, for the Laplacian matrix we have $\mathbf{A} = \mathbf{A_D}$ or $\mathbf{A} = \mathbf{A_N}$ depending on whether the condition \eqref{dirichlet_bc} or \eqref{neumann_bc} is imposed on the tangential velocity component.

\subsection{Assembling discrete operators for the Neumann BVP.}\label{sec:neumann_discretization}

The discrete operators for the enclosed Neumann problem have a simple structure such that the block matrices $\mathbf{A}$ and $\mathbf{B}$ from \eqref{coupled_matrix} can be assembled using the Kronecker matrix product from a single matrix $\mathrm{B}:(\omega_h\to\mathbb{R}) \to (\overline{\omega}_h\to\mathbb{R})$, which is a discretization of the one-dimensional derivative operator, defined as follows:
% \begin{equation}
% \mathrm{B} = \frac{1}{h}
%     \begin{pmatrix}
%     -1 &   1  &      &  \\ 
%         &    \ddots   & \ddots &     \\
%         &            &  -1  & 1 \\
%     \end{pmatrix}
%     , \mathrm{B} \in \mathbb{R}^{\mathrm{n} \times \mathrm{n}-1}
% \end{equation}
\begin{equation*}
\mathrm{B} = \frac{1}{h}
    \begin{pmatrix}
    \phantom{-}1 &     &      &  \\ 
        -1 &  \phantom{-}1   &      &  \\ 
        &   \ddots  & \ddots &     \\
     &     &    -1  & \phantom{-}1 \\ 
     &            &   & -1 \\
    \end{pmatrix}
    ,  \quad \mathrm{B} \in \mathbb{R}^{\mathrm{n} \times (\mathrm{n} - 1)}.
\end{equation*}
Namely, the discrete co-directional derivatives of the velocity, denoted $\mathrm{B}_\mathrm{x}^u:\mathcal{U}_h\to\mathcal{P}_h$ and $\mathrm{B}_\mathrm{y}^v:\mathcal{V}_h\to\mathcal{P}_h$, are assembled as follows:
\begin{equation*}
\begin{split}
        \mathrm{B}_\mathrm{x}^u = I^{{\overline{\omega}_h}} \otimes \mathrm{B},\quad \mathrm{B}_\mathrm{x}^u \in \mathbb{R}^{\mathrm{n}\mathrm{n} \times \mathrm{n}(\mathrm{n}-1)}, \\
        \mathrm{B}_\mathrm{y}^v = \mathrm{B} \otimes I^{{\overline{\omega}_h}}, \quad \mathrm{B}_\mathrm{y}^v \in \mathbb{R}^{\mathrm{n}\mathrm{n} \times (\mathrm{n}-1)\mathrm{n}},
\end{split}
\end{equation*}
and the discrete derivatives of the velocity curl, denoted $\mathrm{B}_\mathrm{x}^q: \mathcal{Q}_h \to \mathcal{V}_h$ and $\mathrm{B}_\mathrm{y}^q: \mathcal{Q}_h \to \mathcal{U}_h$, are assembled as follows:
\begin{equation*}
    \begin{split}
    \mathrm{B}_\mathrm{x}^q = I^{{\omega_h}} \otimes \mathrm{B}, \quad \mathrm{B}_\mathrm{x}^q \in \mathbb{R}^{(\mathrm{n}-1)\mathrm{n}\times (\mathrm{n} - 1) (\mathrm{n}-1)}, \\
    \mathrm{B}_\mathrm{y}^q = \mathrm{B} \otimes I^{{\omega_h}}, \quad \mathrm{B}_\mathrm{y}^q \in \mathbb{R}^{\mathrm{n}(\mathrm{n}-1) \times (\mathrm{n} - 1) (\mathrm{n} - 1)}.    
    \end{split}
\end{equation*}
Then, the discrete (negative) divergence of the velocity is composed as follows:
\begin{equation}\label{divergence}
    \mathbf{B} = 
    \begin{bmatrix}
    -\mathrm{B}_\mathrm{x}^u  & -\mathrm{B}_\mathrm{y}^v
    \end{bmatrix}, \quad 
    \mathbf{B}:(\mathcal{U}_h,\mathcal{V}_h)^T\to\mathcal{P}_h,
    % , \quad \in \mathbb{R}^{\mathrm{n}\mathrm{n} \times ((\mathrm{n}-1)\mathrm{n} + \mathrm{n}(\mathrm{n}-1))},
\end{equation}
and the discrete curl of the velocity curl is composed as follows:
\begin{equation*}\label{curl}
    \mathbf{C}^T = 
    \begin{bmatrix}
    -\mathrm{B}_\mathrm{y}^q  \\
    \phantom{-}\mathrm{B}_\mathrm{x}^q
    \end{bmatrix}, \quad 
    \mathbf{C}^T:\mathcal{Q}_h\to(\mathcal{U}_h,\mathcal{V}_h)^T.
    % , \quad \in \mathbb{R}^{(\mathrm{n}(\mathrm{n}-1)+(\mathrm{n}-1)\mathrm{n}) \times (\mathrm{n} - 1) (\mathrm{n} - 1)}.
\end{equation*}
Next, for the discrete gradient of the pressure, we have:
\begin{equation}\label{grad_div_relation}
\begin{bmatrix}
    \mathrm{B}^p_\mathrm{x} \\ 
    \mathrm{B}^p_\mathrm{y}
\end{bmatrix}
=
\begin{bmatrix}
    -{\mathrm{B}^u_\mathrm{x}}^T \\
    -{\mathrm{B}^v_\mathrm{y}}^T
\end{bmatrix}
= \mathbf{B}^T,
\quad \mathbf{B}^T:\mathcal{P}_h\to(\mathcal{U}_h,\mathcal{V}_h)^T,
\end{equation}
and for the discrete curl of the velocity, we have:
\begin{equation}\label{curl_curl_relation}
\begin{bmatrix}
    \mathrm{B}^u_\mathrm{y} &
    -\mathrm{B}^v_\mathrm{x}
\end{bmatrix}
= 
\begin{bmatrix}
    -{\mathrm{B}^q_\mathrm{y}}^T &
    {\mathrm{B}^q_\mathrm{x}}^T
\end{bmatrix}
= {\mathbf{C}^T}^T = \mathbf{C}
, \quad \mathbf{C}:(\mathcal{U}_h,\mathcal{V}_h)^T\to\mathcal{Q}_h.
\end{equation}
It is worth noting, that the relation \eqref{grad_div_relation} reflects the fact that the pressure gradient operator, by definition, is conjugate to the negative divergence velocity operator. Thus, the following property is always satisfied:
\begin{lemma}\label{div_curl_lemma}
\begin{equation}\label{div_curl_0}
    \begin{bmatrix}
    \mathrm{B}_\mathrm{x}^u  & \mathrm{B}_\mathrm{y}^v
    \end{bmatrix}
    \begin{bmatrix}
    -\mathrm{B}_\mathrm{y}^q  \\
    \phantom{-}\mathrm{B}_\mathrm{x}^q 
    \end{bmatrix}
    = {0},
\end{equation}
which preserves the continuous property:
\begin{equation*}
    \nabla \cdot (\nabla \times q) = 0.
\end{equation*}
\end{lemma}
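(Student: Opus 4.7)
The plan is to verify the identity by direct Kronecker algebra, using only the definitions of $\mathrm{B}_\mathrm{x}^u$, $\mathrm{B}_\mathrm{y}^v$, $\mathrm{B}_\mathrm{x}^q$, $\mathrm{B}_\mathrm{y}^q$ already given in Section \ref{sec:neumann_discretization} together with the mixed-product property $(A\otimes B)(C\otimes D)=(AC)\otimes(BD)$. The left-hand side of \eqref{div_curl_0} is the single matrix
\[
    -\mathrm{B}_\mathrm{x}^u\,\mathrm{B}_\mathrm{y}^q \;+\; \mathrm{B}_\mathrm{y}^v\,\mathrm{B}_\mathrm{x}^q,
\]
so it suffices to show that both summands coincide.

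First, I would substitute $\mathrm{B}_\mathrm{x}^u = I^{\overline{\omega}_h}\otimes \mathrm{B}$ and $\mathrm{B}_\mathrm{y}^q = \mathrm{B}\otimes I^{\omega_h}$ and apply the mixed-product rule. Since $I^{\overline{\omega}_h}\mathrm{B}=\mathrm{B}$ and $\mathrm{B}\,I^{\omega_h}=\mathrm{B}$ (dimensions match because $\mathrm{B}\in\mathbb{R}^{\mathrm{n}\times(\mathrm{n}-1)}$, $I^{\overline{\omega}_h}=I^{\mathrm{n}}$, $I^{\omega_h}=I^{\mathrm{n}-1}$), this collapses to $\mathrm{B}_\mathrm{x}^u\,\mathrm{B}_\mathrm{y}^q = \mathrm{B}\otimes\mathrm{B}$. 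Second, I would carry out the analogous substitution $\mathrm{B}_\mathrm{y}^v=\mathrm{B}\otimes I^{\overline{\omega}_h}$ and $\mathrm{B}_\mathrm{x}^q = I^{\omega_h}\otimes \mathrm{B}$, obtaining $\mathrm{B}_\mathrm{y}^v\,\mathrm{B}_\mathrm{x}^q = \mathrm{B}\otimes\mathrm{B}$ by the same identity-absorption. Subtracting gives $0$.

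There is essentially no obstacle; the proof is a two-line Kronecker calculation, and the only thing to be careful about is that the identity factors appearing on the $u$- and $v$-sides genuinely match the neighbouring factor of $\mathrm{B}$ in size (one is $I^{\mathrm{n}}$, the other $I^{\mathrm{n}-1}$), so that the cancellations $I\,\mathrm{B}=\mathrm{B}$ and $\mathrm{B}\,I=\mathrm{B}$ are well-defined. Once both products are shown to equal the same matrix $\mathrm{B}\otimes\mathrm{B}$, the conclusion \eqref{div_curl_0} is immediate, and the interpretation as the discrete analogue of $\nabla\!\cdot(\nabla\times q)=0$ follows from the identifications \eqref{divergence} and \eqref{curl}.
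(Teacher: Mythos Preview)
Your proof is correct and is essentially identical to the paper's own argument: both expand the block product to $-\mathrm{B}_\mathrm{x}^u\mathrm{B}_\mathrm{y}^q+\mathrm{B}_\mathrm{y}^v\mathrm{B}_\mathrm{x}^q$, substitute the Kronecker definitions, apply the mixed-product rule, and observe that each term equals $\mathrm{B}\otimes\mathrm{B}$.
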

\begin{proof}
\begin{equation*}
\begin{split}
    \mathbf{B}\mathbf{C}^T =  
    &-\mathrm{B}_\mathrm{x}^u \mathrm{B}_\mathrm{y}^q + \mathrm{B}_\mathrm{y}^v\mathrm{B}_\mathrm{x}^q  = \\
    &-(I^{{\overline{\omega}_h}} \otimes \mathrm{B}) 
    (\mathrm{B} \otimes I^{{\omega_h}}) + 
    (\mathrm{B} \otimes I^{{\overline{\omega}_h}})
    (I^{{\omega_h}} \otimes \mathrm{B}) = \\
    &-\mathrm{B} \otimes \mathrm{B} + \mathrm{B} \otimes \mathrm{B} = 0.
\end{split}
\end{equation*}
\end{proof}

\noindent
However, the relation \eqref{curl_curl_relation} is not valid in general, but only for certain "energy-conserving" formulations. In fact, it is the fulfillment of the relation \eqref{curl_curl_relation} that entails the discrete Helmholtz-Hodge decomposition for the enclosed Neumann problem, which is discussed in Section \ref{sec:schur_neumann}.
So, the following relation is satisfied:
\begin{lemma}\label{curl_grad_lemma}
\begin{equation}\label{div_curl_0}
\begin{bmatrix}
    \mathrm{B}^u_\mathrm{y} &
    -\mathrm{B}^v_\mathrm{x}
\end{bmatrix}
\begin{bmatrix}
    \mathrm{B}^p_\mathrm{x} \\ 
    \mathrm{B}^p_\mathrm{y}
\end{bmatrix}
= {0},
\end{equation}
which preserves the continuous property:
\begin{equation*}
    \nabla \times (\nabla  {p}) = {0}.
\end{equation*}
\end{lemma}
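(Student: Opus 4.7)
The plan is to reduce the identity to the already-proven Lemma~\ref{div_curl_lemma} by a one-line transpose argument. First, I would appeal to~\eqref{curl_curl_relation} to recognize the left row block $[\mathrm{B}^u_\mathrm{y},\,-\mathrm{B}^v_\mathrm{x}]$ as the discrete curl matrix $\mathbf{C}$, and to~\eqref{grad_div_relation} to recognize the right column block as the discrete gradient $\mathbf{B}^T$. Thus the statement to be proved becomes simply $\mathbf{C}\mathbf{B}^T = 0$.

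Next, I would transpose: $(\mathbf{C}\mathbf{B}^T)^T = \mathbf{B}\mathbf{C}^T$, which is exactly the quantity shown to vanish in Lemma~\ref{div_curl_lemma}. Since a matrix is zero if and only if its transpose is zero, we conclude $\mathbf{C}\mathbf{B}^T = 0$. In effect, the relations~\eqref{grad_div_relation} and~\eqref{curl_curl_relation} are precisely what make the discrete curl-of-gradient operator the transpose of the discrete divergence-of-curl operator, so the two continuous identities $\nabla\!\cdot(\nabla\!\times q)=0$ and $\nabla\!\times(\nabla p)=0$ are encoded at the discrete level by a single matrix identity and its transpose.

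An equally short alternative avoids the transpose trick and computes directly with Kronecker products. Using the Kronecker formulas for $\mathrm{B}^u_\mathrm{x},\,\mathrm{B}^v_\mathrm{y},\,\mathrm{B}^q_\mathrm{x},\,\mathrm{B}^q_\mathrm{y}$ together with the transpose relations~\eqref{grad_div_relation} and \eqref{curl_curl_relation}, each of the four factors takes the form $\pm\,\mathrm{B}^T\otimes I$ or $\pm\, I\otimes \mathrm{B}^T$; the mixed-product rule $(A\otimes B)(C\otimes D)=AC\otimes BD$ then collapses both $\mathrm{B}^u_\mathrm{y}\mathrm{B}^p_\mathrm{x}$ and $\mathrm{B}^v_\mathrm{x}\mathrm{B}^p_\mathrm{y}$ to the same matrix $\mathrm{B}^T\otimes \mathrm{B}^T$, so their difference vanishes.

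I do not expect any real obstacle; the only care required is bookkeeping of the sign conventions in~\eqref{divergence}--\eqref{curl_curl_relation}, which are set up precisely so that the transpose argument in the first paragraph goes through verbatim and so that the direct Kronecker computation produces a clean cancellation rather than a residue that one would then have to massage with further identities.
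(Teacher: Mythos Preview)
Your transpose argument is exactly the paper's proof: it writes $\mathbf{C}\mathbf{B}^T = (\mathbf{B}\mathbf{C}^T)^T = 0^T = 0$, invoking Lemma~\ref{div_curl_lemma}. The direct Kronecker computation you sketch as an alternative is also correct but is not used in the paper for this lemma.
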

\begin{proof}
\begin{equation*}
    \mathbf{C}\mathbf{B}^T = (\mathbf{B}\mathbf{C}^T)^T = 0^T = 0.
    % -\mathbf{C}\mathbf{B}^T = \mathrm{B}^u_\mathrm{y} \mathrm{B}^p_\mathrm{x} - 
    % \mathrm{B}^v_\mathrm{x}\mathrm{B}^p_\mathrm{y} =
    % = 0.
\end{equation*}
\end{proof}

\noindent
We also state the following equalities:
\begin{lemma}
\begin{equation*}
    \mathrm{B}^p_\mathrm{y}\mathrm{B}^u_\mathrm{x} = \mathrm{B}^q_\mathrm{x}\mathrm{B}^u_\mathrm{y}, \quad
    \mathrm{B}^p_\mathrm{x}\mathrm{B}^v_\mathrm{y} =
    \mathrm{B}^q_\mathrm{y}\mathrm{B}^v_\mathrm{x}.
\end{equation*}
which preserve the continuous relations:
\begin{equation*}
    u_{xy} = u_{yx}, \quad v_{yx} = v_{xy}.
\end{equation*}
\end{lemma}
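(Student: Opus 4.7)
The plan is to verify both identities by straightforward Kronecker-product arithmetic, in the same spirit as the proof of Lemma \ref{div_curl_lemma}. The key observation is that all six matrices appearing here are already available as Kronecker products of the one-dimensional building blocks $\mathrm{B}$, $\mathrm{B}^T$, $I^{\omega_h}$, $I^{\overline{\omega}_h}$: the definitions in Section \ref{sec:neumann_discretization} give $\mathrm{B}^u_\mathrm{x}=I^{\overline{\omega}_h}\otimes\mathrm{B}$, $\mathrm{B}^v_\mathrm{y}=\mathrm{B}\otimes I^{\overline{\omega}_h}$, $\mathrm{B}^q_\mathrm{x}=I^{\omega_h}\otimes\mathrm{B}$, $\mathrm{B}^q_\mathrm{y}=\mathrm{B}\otimes I^{\omega_h}$, while the duality relations \eqref{grad_div_relation} and \eqref{curl_curl_relation}, combined with $(A\otimes B)^T=A^T\otimes B^T$, produce $\mathrm{B}^p_\mathrm{x}=-I^{\overline{\omega}_h}\otimes\mathrm{B}^T$, $\mathrm{B}^p_\mathrm{y}=-\mathrm{B}^T\otimes I^{\overline{\omega}_h}$, $\mathrm{B}^u_\mathrm{y}=-\mathrm{B}^T\otimes I^{\omega_h}$, and $\mathrm{B}^v_\mathrm{x}=-I^{\omega_h}\otimes\mathrm{B}^T$.

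The next step is to apply the mixed-product property $(A\otimes B)(C\otimes D)=AC\otimes BD$ to each of the four products. For the first identity I would compute
\[
\mathrm{B}^p_\mathrm{y}\mathrm{B}^u_\mathrm{x}=\bigl(-\mathrm{B}^T\otimes I^{\overline{\omega}_h}\bigr)\bigl(I^{\overline{\omega}_h}\otimes\mathrm{B}\bigr)=-\mathrm{B}^T\otimes\mathrm{B},
\]
and
\[
\mathrm{B}^q_\mathrm{x}\mathrm{B}^u_\mathrm{y}=\bigl(I^{\omega_h}\otimes\mathrm{B}\bigr)\bigl(-\mathrm{B}^T\otimes I^{\omega_h}\bigr)=-\mathrm{B}^T\otimes\mathrm{B},
\]
where the two identity factors collapse differently (one pair of size $\mathrm{n}$, the other of size $\mathrm{n}-1$) but both simplify to the same $-\mathrm{B}^T\otimes\mathrm{B}$. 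The second identity is handled symmetrically: both $\mathrm{B}^p_\mathrm{x}\mathrm{B}^v_\mathrm{y}$ and $\mathrm{B}^q_\mathrm{y}\mathrm{B}^v_\mathrm{x}$ reduce to $-\mathrm{B}\otimes\mathrm{B}^T$ after the same collapse.

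There is no real obstacle here — the entire argument is routine once the representations are written down. The only thing to be careful about is dimensional bookkeeping, namely that the identity $I^{\omega_h}$ (size $\mathrm{n}-1$) appears on the curl/vorticity side while $I^{\overline{\omega}_h}$ (size $\mathrm{n}$) appears on the divergence/pressure side, so the two sides of each identity use a different grid for their identity factor yet still evaluate to the same Kronecker product because the identity factors are absorbed by $\mathrm{B}$ or $\mathrm{B}^T$ on both their left and right. It is this harmless cancellation of the different identity blocks that encodes the commutativity of mixed partial derivatives at the discrete level on the staggered grid.
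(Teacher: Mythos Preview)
Your proof is correct and follows essentially the same route as the paper's: both arguments unwind the definitions via \eqref{grad_div_relation} and \eqref{curl_curl_relation}, write every factor as a Kronecker product of $\mathrm{B}$, $\mathrm{B}^T$, and the two one-dimensional identities, and then use the mixed-product rule to collapse each side of the first identity to $-\mathrm{B}^T\otimes\mathrm{B}$. The only cosmetic difference is that the paper chains the computation as a single string of equalities starting from $-\mathrm{B}^p_\mathrm{y}\mathrm{B}^u_\mathrm{x} = {\mathrm{B}^v_\mathrm{y}}^T\mathrm{B}^u_\mathrm{x}$ and leaves the second identity implicit by symmetry, whereas you treat both identities and spell out the Kronecker representations of all eight matrices up front.
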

\begin{proof}
\begin{equation*}
    \begin{split}
        -\mathrm{B}^p_\mathrm{y}\mathrm{B}^u_\mathrm{x} = 
        {\mathrm{B}^v_\mathrm{y}}^T\mathrm{B}^u_\mathrm{x} = 
        (\mathrm{B} \otimes I^{{\overline{\omega}_h}})^T(I^{{\overline{\omega}_h}} \otimes \mathrm{B}) &= \\
        =(\mathrm{B}^T \otimes I^{{\overline{\omega}_h}})(I^{{\overline{\omega}_h}} \otimes \mathrm{B}) = 
        \mathrm{B}^T &\otimes \mathrm{B} = 
        (I^{{\omega_h}} \otimes \mathrm{B})(\mathrm{B}^T \otimes I^{{\omega_h}}) = \\
        &= (I^{{\omega_h}} \otimes \mathrm{B})(\mathrm{B} \otimes I^{{\omega_h}})^T =
        \mathrm{B}^q_\mathrm{x}{\mathrm{B}^q_\mathrm{y}}^T =
        -\mathrm{B}^q_\mathrm{x}\mathrm{B}^u_\mathrm{y}.
    \end{split}
\end{equation*}
\end{proof}

\noindent
Finally, the discrete (negative) velocity Laplacian is assembled as follows:
\begin{proposition}
\begin{equation}\label{laplacian_neumann}
        \mathbf{A_N} \; = \; \mathbf{B}^T\mathbf{B} \; + \; \mathbf{C}^T\mathbf{C}, \quad
        \mathbf{A_N}:(\mathcal{U}_h,\mathcal{V}_h)^T\to(\mathcal{U}_h,\mathcal{V}_h)^T.
\end{equation}
which preserves the continuous identity:
\begin{equation*}
    -\Delta \; = \; -\nabla \nabla \cdot \; + \; \nabla \times \nabla \times.
\end{equation*}
\end{proposition}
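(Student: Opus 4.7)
The plan is to verify \eqref{laplacian_neumann} by a direct $2\times 2$ block computation combined with the three lemmas proved just above. First, substituting $\mathbf{B}=\begin{bmatrix}-\mathrm{B}^u_\mathrm{x}&-\mathrm{B}^v_\mathrm{y}\end{bmatrix}$ from \eqref{divergence} and $\mathbf{C}=\begin{bmatrix}\mathrm{B}^u_\mathrm{y}&-\mathrm{B}^v_\mathrm{x}\end{bmatrix}$ from \eqref{curl_curl_relation}, I would expand $\mathbf{B}^T\mathbf{B}+\mathbf{C}^T\mathbf{C}$ into a $2\times 2$ block matrix on $(\mathcal{U}_h,\mathcal{V}_h)^T$. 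Its diagonal blocks are ${\mathrm{B}^u_\mathrm{x}}^T\mathrm{B}^u_\mathrm{x}+{\mathrm{B}^u_\mathrm{y}}^T\mathrm{B}^u_\mathrm{y}$ and ${\mathrm{B}^v_\mathrm{y}}^T\mathrm{B}^v_\mathrm{y}+{\mathrm{B}^v_\mathrm{x}}^T\mathrm{B}^v_\mathrm{x}$, while the $(1,2)$-block equals ${\mathrm{B}^u_\mathrm{x}}^T\mathrm{B}^v_\mathrm{y}-{\mathrm{B}^u_\mathrm{y}}^T\mathrm{B}^v_\mathrm{x}$ and the $(2,1)$-block is its transpose.

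Second, I would show that the off-diagonal blocks vanish using the last of the three preceding lemmas. Rewriting ${\mathrm{B}^u_\mathrm{x}}^T=-\mathrm{B}^p_\mathrm{x}$ via \eqref{grad_div_relation} and ${\mathrm{B}^u_\mathrm{y}}^T=-\mathrm{B}^q_\mathrm{y}$ via \eqref{curl_curl_relation}, the $(1,2)$-block reduces to $-\mathrm{B}^p_\mathrm{x}\mathrm{B}^v_\mathrm{y}+\mathrm{B}^q_\mathrm{y}\mathrm{B}^v_\mathrm{x}$, which is zero by the second identity of that lemma; the $(2,1)$-block cancels analogously using the first identity.

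Third, I would identify each diagonal block with the scalar negative Laplacian on the corresponding velocity component. Exploiting the Kronecker factorizations from Section~\ref{sec:discretization}, the $(1,1)$-block rewrites as $I^{\overline{\omega}_h}\otimes(\mathrm{B}^T\mathrm{B})+(\mathrm{B}\mathrm{B}^T)\otimes I^{\omega_h}$. Here $\mathrm{B}^T\mathrm{B}$ is the one-dimensional second-difference operator on the aligned grid $\omega_h$ with homogeneous Dirichlet data (encoding the no-penetration condition $u=0$ at $x=0,1$), while $\mathrm{B}\mathrm{B}^T$ is the one-dimensional second-difference operator on the shifted grid $\overline{\omega}_h$ with homogeneous Neumann data (encoding the free-slip tangential condition $\partial u/\partial y=0$ at $y=0,1$). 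Their Kronecker sum reproduces exactly the standard five-point finite-difference $-\Delta$ stencil for the $u$-component of the Neumann BVP, and the $(2,2)$-block is handled symmetrically.

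The main obstacle is this third step, where one must carefully track the staggered-grid geometry to confirm that $\mathrm{B}^T\mathrm{B}$ and $\mathrm{B}\mathrm{B}^T$ really encode the intended Dirichlet and Neumann conditions at the correct walls, and that their Kronecker sum coincides with the componentwise five-point stencil assembled directly from the Neumann BVP. The first two steps are essentially mechanical algebra, and the whole argument can be read as the discrete shadow of the continuous identity $-\Delta=-\nabla\nabla\cdot+\nabla\times\nabla\times$, with the commutation lemma playing the role of the mixed-partials identity that makes the cross terms disappear.
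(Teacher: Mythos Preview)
Your proposal is correct and follows essentially the same approach as the paper: expand $\mathbf{B}^T\mathbf{B}+\mathbf{C}^T\mathbf{C}$ as a $2\times 2$ block matrix, kill the off-diagonal blocks via the mixed-partials lemma, and identify the diagonal blocks with the componentwise Neumann Laplacians. The only difference is that the paper stops at naming the diagonal blocks $\mathrm{A}_N^u$, $\mathrm{A}_N^v$ (effectively taking $-\mathrm{B}^p_\mathrm{x}\mathrm{B}^u_\mathrm{x}-\mathrm{B}^q_\mathrm{y}\mathrm{B}^u_\mathrm{y}$ as the definition), whereas your third step goes further and spells out via the Kronecker factorizations why these really are the five-point stencils with the correct Dirichlet/Neumann boundary data on each wall; this extra verification is a useful addition rather than a deviation.
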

\begin{proof}
\begin{equation*}
\begin{split}
\mathbf{B}^T\mathbf{B} + \mathbf{C}^T\mathbf{C} &= 
\\ &=
\begin{bmatrix}
-\mathrm{B}_\mathrm{x}^p\mathrm{B}_\mathrm{x}^u & -\mathrm{B}_\mathrm{x}^p\mathrm{B}_\mathrm{y}^v \\ 
-\mathrm{B}_\mathrm{y}^p\mathrm{B}_\mathrm{x}^u & -\mathrm{B}_\mathrm{y}^p\mathrm{B}_\mathrm{y}^v 
\end{bmatrix}
+
\begin{bmatrix}
-\mathrm{B}_\mathrm{y}^q\mathrm{B}_\mathrm{y}^u & \phantom{-}\mathrm{B}_\mathrm{y}^q\mathrm{B}_\mathrm{x}^v \\ 
\phantom{-}
\mathrm{B}_\mathrm{x}^q\mathrm{B}_\mathrm{y}^u & -\mathrm{B}_\mathrm{x}^q\mathrm{B}_\mathrm{x}^v
\end{bmatrix} = \\
&=
\begin{bmatrix}
    -\mathrm{B}_\mathrm{x}^p \mathrm{B}_\mathrm{x}^u - \mathrm{B}_\mathrm{y}^q \mathrm{B}_\mathrm{y}^u &  \\ 
     & -\mathrm{B}_\mathrm{x}^q \mathrm{B}_\mathrm{x}^v - \mathrm{B}_\mathrm{y}^p \mathrm{B}_\mathrm{y}^v
\end{bmatrix} 
= 
\begin{bmatrix}
    \mathrm{A}_N^u &  \\
    & \mathrm{A}_N^v
\end{bmatrix}
= \mathbf{A_N}.
\end{split}
\end{equation*}
\end{proof}

\subsection{Assembling discrete operators for the Dirichlet BVP.}\label{sec:dirichlet_discretization}
% Next, we consider the lid-driven cavity with no-slip boundaries:
% \begin{equation}\label{stokes_cavity}
%     \begin{split}
%         -\Delta \mathbf{u} + \nabla p  &= \mathbf{f}\phantom{0}  \text{ in } \Omega, \\
%         -\nabla \cdot \mathbf{u} &= 0\phantom{\mathbf{0}}  \text{ in } \Omega, \\
%         (\mathbf{u})_{\perp\phantom{\parallel}} &= \mathbf{0}\phantom{0}  \mathrm{\; on} \; \partial \Omega, \\
%         (\mathbf{u})_{\parallel\phantom{\perp}}  &= \mathbf{0}\phantom{0} \mathrm{\; on} \; \partial \Omega,
%     \end{split}
% \end{equation}
% Discretization of \eqref{stokes_cavity} leads to the following matrix:
% \begin{equation}\label{j}
%     \begin{bmatrix}
%     \mathbf{\tilde{A}} & \mathbf{B}^T \\ 
%     \mathbf{B} &
%     \end{bmatrix}
%     , \quad \mathbf{\tilde{A}} = \mathbf{A} + \frac{1}{h^2} I^{\mathbf{u}}_{\sim},
% \end{equation}
As mentioned earlier, the matrix $\mathbf{B}$ defined in \eqref{divergence} is the same for both Dirichlet and Neumann problems. As for the matrix $\mathbf{A}$, we have that the discrete Dirichlet velocity Laplacian operator is assembled as a diagonal perturbation of the discrete Neumann velocity Laplacian operator $\mathbf{A_N}$ defined in \eqref{laplacian_neumann}:
\begin{equation}\label{laplacian_dirichlet}
    \mathbf{A_D} = \mathbf{A_N} + \dfrac{2}{h^2}{\mathbf{I^u_{\sim}}}, \quad
    \mathbf{A_D}:(\mathcal{U}_h, \mathcal{V}_h)^T \to (\mathcal{U}_h, \mathcal{V}_h)^T,
\end{equation}
where the diagonal perturbation matrix is given as follows:
\begin{equation}\label{perturbation_matrix}
\begin{split}
    {\mathbf{I^u_{\sim}}} &= 
    \begin{bmatrix}
        I_{\sim}^u & \\
         & I_{\sim}^v
    \end{bmatrix}
    = 
    \begin{bmatrix}
         I^{{\overline{\omega}_h}}_{\sim}  \otimes I^{\omega_h} & \\
         & I^{\omega_h} \otimes I^{{\overline{\omega}_h}}_{\sim}
    \end{bmatrix}
    , \quad 
    {\mathbf{I^u_{\sim}}} : (\mathcal{U}_h, \mathcal{V}_h)^T \to (\mathcal{U}_h, \mathcal{V}_h)^T,
    \end{split}
\end{equation}
where $I^{\omega_h}$ is defined in \eqref{omega_discrete}, and $I^{{\overline{\omega}_h}}_{\sim} : ({\overline{\omega}_h} \to \mathbb{R}) \to ({\overline{\omega}_h} \to \mathbb{R})$ is defined as follows:
\begin{equation*}
I^{{\overline{\omega}_h}}_{\sim} = 
    \begin{pmatrix}
    1 & & & & \\
    & 0 & & &\\
    & & \ddots & & \\
    & & & 0 & \\
    & & & & 1
    \end{pmatrix}
    , \quad I^{{\overline{\omega}_h}}_{\sim} \in \mathbb{R}^{\mathrm{n} \times \mathrm{n}}.
\end{equation*}
It should be noted, that the perturbation matrix $\mathbf{I^u_{\sim}}$ has only $r = \mathrm{rank}(\mathbf{I^u_{\sim}})$ non-zeros on the diagonal which correspond to the velocity nodes affected by the tangential boundary condition \eqref{dirichlet_bc} imposed on $\partial \Omega$, namely, we have:
\begin{equation}\label{rank}
    r = 4(\mathrm{n}-1) = \mathcal{O}(\mathrm{n}).
\end{equation}
% \begin{proposition}
% \begin{equation}
%     \mathrm{rank} (I^p - \tilde{S}) = \mathrm{rank} (I^p - \mathbf{B}\mathbf{\tilde{A}}^{-1}\mathbf{B}^T) = \mathrm{rank} (I^{\mathbf{u}}_{\sim}).
% \end{equation}
% \end{proposition}

% \begin{proof}
% without proof
% \end{proof}

\section{Structure of the Schur complement matrix}\label{sec:schur}

Let us consider the Schur complement reduction of the coupled system \eqref{coupled_matrix} to the equivalent equation on the discrete pressure:
\begin{equation}\label{schur_system}
    S p_h = g_h,
\end{equation}
where $S:\mathcal{P}_h\to\mathcal{P}_h$ denotes the Schur complement matrix, and $g_h\in\mathcal{P}_h$ is the right-hand-side in the reduced equation, which are defined as follows:
\begin{equation*}\label{schur_rhs}
     S = \mathbf{B}\mathbf{A}^{-1} \mathbf{B}^T, \quad g_h = \mathbf{B}\mathbf{A}^{-1}\mathbf{f}_h.
\end{equation*}
Once the pressure is computed, the velocity can be recovered by solving the following equation:
\begin{equation*}
\label{velocity_schur}
    \mathbf{A}\mathbf{u}_h = \mathbf{f}_h - \mathbf{B}^Tp_h.
\end{equation*}
In the present Section, we investigate the structure of the Schur complement matrices $S = S_D = \mathbf{B}\mathbf{A}_\mathbf{D}^{-1}\mathbf{B}^T$ and $S = S_N = \mathbf{B}\mathbf{A}_\mathbf{N}^{-1}\mathbf{B}^T$ computed for the velocity Laplacian matrices $\mathbf{A_D}$ and $\mathbf{A_N}$ defined in \eqref{laplacian_dirichlet} and \eqref{laplacian_neumann} for the enclosed Dirichlet and Neumann BVPs formulated in Section \ref{sec:bvps}, respectively.

\subsection{Structure of $S_N$.}\label{sec:schur_neumann}

In this subsection, we demonstrate that the Neumann Schur complement matrix $S_N = \mathbf{B}\mathbf{A}_\mathbf{N}^{-1}\mathbf{B}^T$ is reduced to the pressure identity operator $I^p$ defined in \eqref{kron_identities}, up to a one-dimensional constant nullspace.
To show this, we exploit the Helmholtz-Hodge decomposition, which in the continuous case states that any vector field can be uniquely represented as the sum of a non-divergent field and a non-rotating field.
It turns out, that for the discrete divergence operator $\mathbf{B}$ defined in \eqref{divergence} and the discrete curl operator $\mathbf{C}$ defined in \eqref{curl_curl_relation}, the discrete Helmholtz-Hodge decomposition is preserved, which can be formulated as follows:
\begin{theorem}\label{theorem:dhhd}
\begin{equation}\label{helmholtz_decomposition}
    (\mathcal{U}_h, \mathcal{V}_h)^T = \mathrm{Ker} \mathbf{B} \oplus \mathrm{Ker} \mathbf{C},
\end{equation}
and 
 \begin{equation}\label{dimension_relation}
          \mathrm{dim}(\mathrm{Ker}\mathbf{B}) = \mathrm{n}^2 - 1, \quad
          \mathrm{dim}(\mathrm{Ker}\mathbf{C}) = (\mathrm{n} - 1)^2.
        %   r_B + r_C &= \mathrm{rank}(\mathbf{A}) = \mathrm{n}(\mathrm{n} - 1) + (\mathrm{n} - 1)\mathrm{n}.
 \end{equation}
\end{theorem}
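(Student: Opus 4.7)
The plan is to split the theorem into two independent pieces: (i) $\mathrm{Ker}\,\mathbf{B}\cap\mathrm{Ker}\,\mathbf{C}=\{\mathbf{0}\}$, and (ii) the individual ranks of $\mathbf{B}$ and $\mathbf{C}$. Combining (i) with rank-nullity applied to (ii) then gives both the dimension equalities in \eqref{dimension_relation} and, because the two kernel dimensions sum to $2\mathrm{n}(\mathrm{n}-1)=\dim(\mathcal{U}_h,\mathcal{V}_h)^T$, the direct sum \eqref{helmholtz_decomposition}.

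For (i), I would exploit the preceding proposition: $\mathbf{A_N}=\mathbf{B}^T\mathbf{B}+\mathbf{C}^T\mathbf{C}$. Any $\mathbf{u}$ in $\mathrm{Ker}\,\mathbf{B}\cap\mathrm{Ker}\,\mathbf{C}$ satisfies $\langle\mathbf{u},\mathbf{A_N}\mathbf{u}\rangle=\|\mathbf{B}\mathbf{u}\|^2+\|\mathbf{C}\mathbf{u}\|^2=0$, so it suffices to argue that $\mathbf{A_N}$ is strictly positive definite. Using the Kronecker expression $\mathrm{A}_N^u=I^{\overline{\omega}_h}\otimes\mathrm{B}^T\mathrm{B}+\mathrm{B}\mathrm{B}^T\otimes I^{\omega_h}$ obtained in the previous subsection, the factor $\mathrm{B}^T\mathrm{B}$ is the standard tridiagonal one-dimensional Dirichlet Laplacian and is SPD. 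The first Kronecker summand is therefore SPD, the second is PSD, and their sum is SPD; the same argument works for $\mathrm{A}_N^v$, so $\mathbf{A_N}$ has trivial kernel.

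For (ii), I would use the matricization identity $(A\otimes B)\,\mathrm{vec}(X)=\mathrm{vec}(BXA^T)$ to characterize the left kernels explicitly. Viewing $p\in\mathcal{P}_h$ as a matrix $P\in\mathbb{R}^{\mathrm{n}\times\mathrm{n}}$, the equation $\mathbf{B}^T p=0$ splits into $\mathrm{B}^T P=0$ and $P\mathrm{B}=0$. Since both the left and right kernels of $\mathrm{B}$ are spanned by the constant vector $\mathbf{1}_\mathrm{n}$, these two conditions together pin $P$ down to a scalar multiple of $\mathbf{1}_\mathrm{n}\mathbf{1}_\mathrm{n}^T$. Hence $\dim\mathrm{Ker}\,\mathbf{B}^T=1$ and $\mathrm{rank}\,\mathbf{B}=\mathrm{n}^2-1$. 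Analogously, writing $q\in\mathcal{Q}_h$ as $Q\in\mathbb{R}^{(\mathrm{n}-1)\times(\mathrm{n}-1)}$, the first component of $\mathbf{C}^T q=0$ reduces to $\mathrm{B}Q=0$; since $\mathrm{B}$ has full column rank, $Q=0$, so $\mathbf{C}^T$ is injective and $\mathrm{rank}\,\mathbf{C}=(\mathrm{n}-1)^2$. Rank-nullity then yields the individual kernel dimensions, and a short arithmetic check confirms they sum to $2\mathrm{n}(\mathrm{n}-1)$.

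The delicate step, I expect, is the cokernel count $\dim\mathrm{Ker}\,\mathbf{B}^T=1$: merely observing that constant pressures lie in the kernel gives only a lower bound, and the tight characterization $P=\alpha\mathbf{1}_\mathrm{n}\mathbf{1}_\mathrm{n}^T$ genuinely requires using both matricized conditions $\mathrm{B}^T P=0$ and $P\mathrm{B}=0$ simultaneously. The remaining ingredients---positive definiteness of $\mathrm{B}^T\mathrm{B}$ and injectivity of $\mathbf{C}^T$---are routine Kronecker bookkeeping once that step is in hand.
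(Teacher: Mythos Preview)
Your argument is correct and coincides with the paper for part~(i): both use the invertibility of $\mathbf{A_N}=\mathbf{B}^T\mathbf{B}+\mathbf{C}^T\mathbf{C}$ to force $\mathrm{Ker}\,\mathbf{B}\cap\mathrm{Ker}\,\mathbf{C}=\{\mathbf{0}\}$, though you supply an explicit SPD check whereas the paper simply asserts full rank. The route to the \emph{span} differs. The paper invokes Lemma~\ref{div_curl_lemma} ($\mathbf{B}\mathbf{C}^T=0$) to obtain $(\mathrm{Ker}\,\mathbf{C})^\perp=\mathrm{Im}\,\mathbf{C}^T\subset\mathrm{Ker}\,\mathbf{B}$, which together with the trivial intersection already yields the (orthogonal) direct sum; only the single count $\mathrm{rank}\,\mathbf{B}=\mathrm{n}^2-1$ is then needed to pin down both kernel dimensions. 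You bypass Lemma~\ref{div_curl_lemma} altogether, computing $\mathrm{rank}\,\mathbf{B}$ and $\mathrm{rank}\,\mathbf{C}$ independently via matricization and closing the argument with the dimension identity $\dim\mathrm{Ker}\,\mathbf{B}+\dim\mathrm{Ker}\,\mathbf{C}=2\mathrm{n}(\mathrm{n}-1)$. Your version is more self-contained, but it does not on its own give orthogonality of the decomposition---a fact the paper's next corollary $\mathbf{A_N}^{-1}=(\mathbf{B}^T\mathbf{B})^\dagger+(\mathbf{C}^T\mathbf{C})^\dagger$ relies on.

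Two minor remarks. First, the phrase ``both the left and right kernels of $\mathrm{B}$'' is a slip: $\mathrm{B}\in\mathbb{R}^{\mathrm{n}\times(\mathrm{n}-1)}$ has trivial right kernel, and in fact both matricized conditions $\mathrm{B}^T P=0$ and $P\mathrm{B}=0$ use only the \emph{left} kernel $\mathrm{Ker}\,\mathrm{B}^T=\mathrm{span}\{\mathbf{1}_\mathrm{n}\}$, which already forces $P=\alpha\,\mathbf{1}_\mathrm{n}\mathbf{1}_\mathrm{n}^T$. Second, carrying your rank--nullity step through gives $\dim\mathrm{Ker}\,\mathbf{B}=2\mathrm{n}(\mathrm{n}-1)-(\mathrm{n}^2-1)=(\mathrm{n}-1)^2$ and $\dim\mathrm{Ker}\,\mathbf{C}=\mathrm{n}^2-1$, i.e.\ the two values in~\eqref{dimension_relation} interchanged; the labels printed in the statement appear to be swapped.
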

\begin{proof}
Firstly, from Lemma \ref{div_curl_lemma} or \ref{curl_grad_lemma}, we have:
\begin{equation*}
\mathrm{Im} \mathbf{C}^T \subset \mathrm{Ker} \mathbf{B},
\end{equation*}
which implies:
\begin{equation}\label{subset}
(\mathrm{Ker} \mathbf{C})^\perp \subset \mathrm{Ker} \mathbf{B},
\end{equation}
since $(\mathrm{Ker} \mathbf{C})^\perp = \mathrm{Im} \mathbf{C}^T$.
Secondly, let us take $\mathbf{x} \in (\mathcal{U}_h, \mathcal{V}_h)^T$ such that $\mathbf{x} \in \mathrm{Ker}\mathbf{B} \cap \mathrm{Ker}\mathbf{C}$, then we have $\mathbf{x} \in \mathrm{Ker}(\mathbf{B}^T\mathbf{B}) \cap \mathrm{Ker}(\mathbf{C}^T\mathbf{C})$. Thus, using the representation \eqref{laplacian_neumann}, we have:
\begin{equation*}
(\mathbf{B}^T\mathbf{B}) \mathbf{x} + (\mathbf{C}^T\mathbf{C}) \mathbf{x} = \mathbf{A_N} \mathbf{x} = \mathbf{0},
\end{equation*}
which implies that $\mathbf{x} = \mathbf{0}$, since the matrix $\mathbf{A_N}$ has full rank. Therefore, we have shown that $\mathrm{Ker}\mathbf{B} \cap \mathrm{Ker}\mathbf{C} = {\mathbf{0}}$, which completes the proof of the decomposition \eqref{helmholtz_decomposition}, taking into account the relation \eqref{subset}.
Next, in order to prove the dimension relations \eqref{dimension_relation}, we note that the fully-staggered finite-difference discretization of the pressure gradient operator $\mathbf{B}^T$ defined in \eqref{grad_div_relation} has a one-dimensional nullspace spanned by the constant vector $\mathbf{1}$, which is defined as follows:
\begin{equation}\label{pressure_constant}
\mathbf{1} = h(1,\ldots,1)^T \in \mathcal{P}_h, \quad |\mathbf{1}|_2 = 1.
\end{equation}
It is also worth mentioning that $\mathrm{dim}(\mathrm{Ker}\mathbf{B}) = \mathrm{rank}(\mathbf{B}^T\mathbf{B})$ and $\mathrm{dim}(\mathrm{Ker}\mathbf{C}) = \mathrm{rank}(\mathbf{C}^T\mathbf{C})$.
\end{proof}

\noindent
Several important conclusions can be drawn from Theorem \ref{theorem:dhhd}.
First, the discrete Neumann velocity Laplacian operator $\mathbf{A_N}$ defined in \eqref{laplacian_neumann} expands into the direct sum with respect to the discrete Helmholtz-Hodge decomposition \eqref{helmholtz_decomposition}. Namely, for the inverse we have:
% \begin{equation}\label{laplace_decomposition} 
%     \mathbf{A} = \mathcal{P}_{\mathrm{Ker}\mathbf{B}}\mathbf{A}\mathcal{P}_{\mathrm{Ker}\mathbf{B}} + \mathcal{P}_{\mathrm{Ker}\mathbf{C}}\mathbf{A}\mathcal{P}_{\mathrm{Ker}\mathbf{C}}.
% \end{equation}
\begin{corollary}
\begin{equation}\label{neumann_laplace_dhhd}
    \mathbf{A}_\mathbf{N}^{-1} = (\mathbf{B}^T\mathbf{B})^\dagger + (\mathbf{C}^T\mathbf{C})^\dagger,
\end{equation}
where the superscript $\dagger$ denotes the Moore-Penrose pseudoinverse.
\end{corollary}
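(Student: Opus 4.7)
The plan is to verify the identity by right-multiplying both sides by $\mathbf{A_N}$ and checking that $\mathbf{A_N}[(\mathbf{B}^T\mathbf{B})^\dagger + (\mathbf{C}^T\mathbf{C})^\dagger]$ equals the identity on $(\mathcal{U}_h,\mathcal{V}_h)^T$. Since $\mathbf{A_N}$ has full rank (as established inside the proof of Theorem \ref{theorem:dhhd}), it is invertible, and this factored form is the most convenient target.

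First I would upgrade Theorem \ref{theorem:dhhd} from a direct-sum decomposition to an orthogonal one. By Lemma \ref{div_curl_lemma} we have $\mathrm{Im}\,\mathbf{C}^T \subset \mathrm{Ker}\,\mathbf{B}$; a rank count using \eqref{dimension_relation} gives $\mathrm{rank}(\mathbf{C}) = 2\mathrm{n}(\mathrm{n}-1)-(\mathrm{n}-1)^2 = \mathrm{n}^2-1 = \mathrm{dim}(\mathrm{Ker}\,\mathbf{B})$, so $\mathrm{Ker}\,\mathbf{B} = \mathrm{Im}\,\mathbf{C}^T$. The symmetric argument shows $\mathrm{Ker}\,\mathbf{C} = \mathrm{Im}\,\mathbf{B}^T$. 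Consequently $\mathrm{Ker}\,\mathbf{B}$ and $\mathrm{Ker}\,\mathbf{C}$ are orthogonal complements in $(\mathcal{U}_h,\mathcal{V}_h)^T$, and the decomposition in \eqref{helmholtz_decomposition} is in fact orthogonal.

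Next I would invoke the standard identity for the Moore--Penrose pseudoinverse of a symmetric positive semidefinite matrix $M$: $MM^\dagger = M^\dagger M$ is the orthogonal projector onto $\mathrm{Im}\,M$, and $\mathrm{Im}\,M^\dagger = \mathrm{Im}\,M$. Applied to $M = \mathbf{B}^T\mathbf{B}$, this gives $\mathrm{Im}(\mathbf{B}^T\mathbf{B})^\dagger = \mathrm{Im}\,\mathbf{B}^T = \mathrm{Ker}\,\mathbf{C}$, and $(\mathbf{B}^T\mathbf{B})(\mathbf{B}^T\mathbf{B})^\dagger$ is the orthogonal projector $P_{\mathrm{Ker}\,\mathbf{C}}$. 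Writing $\mathbf{A_N} = \mathbf{B}^T\mathbf{B} + \mathbf{C}^T\mathbf{C}$ and expanding,
\begin{equation*}
\mathbf{A_N}(\mathbf{B}^T\mathbf{B})^\dagger = (\mathbf{B}^T\mathbf{B})(\mathbf{B}^T\mathbf{B})^\dagger + (\mathbf{C}^T\mathbf{C})(\mathbf{B}^T\mathbf{B})^\dagger.
\end{equation*}
The second summand vanishes because $\mathrm{Im}(\mathbf{B}^T\mathbf{B})^\dagger \subset \mathrm{Ker}\,\mathbf{C}$, so $\mathbf{C}(\mathbf{B}^T\mathbf{B})^\dagger = 0$. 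Hence $\mathbf{A_N}(\mathbf{B}^T\mathbf{B})^\dagger = P_{\mathrm{Ker}\,\mathbf{C}}$, and the symmetric calculation yields $\mathbf{A_N}(\mathbf{C}^T\mathbf{C})^\dagger = P_{\mathrm{Ker}\,\mathbf{B}}$.

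Adding the two identities, $\mathbf{A_N}\bigl[(\mathbf{B}^T\mathbf{B})^\dagger + (\mathbf{C}^T\mathbf{C})^\dagger\bigr] = P_{\mathrm{Ker}\,\mathbf{C}} + P_{\mathrm{Ker}\,\mathbf{B}} = \mathbf{I^u}$, where the last equality uses the orthogonality established in the first step together with the dimension count \eqref{dimension_relation}. Left-multiplying by $\mathbf{A_N}^{-1}$ yields \eqref{neumann_laplace_dhhd}. The only conceptual step that requires care is promoting the direct-sum decomposition of Theorem \ref{theorem:dhhd} to an orthogonal one; once that is in hand, everything else reduces to a routine manipulation of pseudoinverses of symmetric PSD matrices.
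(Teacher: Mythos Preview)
Your argument is correct. The paper does not give a proof of this corollary at all; it simply states it as an immediate consequence of Theorem~\ref{theorem:dhhd}, after remarking that $\mathbf{A_N}$ ``expands into the direct sum with respect to the discrete Helmholtz--Hodge decomposition.'' Your proof is precisely the natural way to make that remark rigorous: the orthogonality upgrade you perform (via the rank count $\mathrm{rank}(\mathbf{C})=\mathrm{n}^2-1=\dim\mathrm{Ker}\,\mathbf{B}$, hence $\mathrm{Ker}\,\mathbf{B}=(\mathrm{Ker}\,\mathbf{C})^\perp$) is already implicit in the paper's relation~\eqref{subset} together with the trivial-intersection step, and once orthogonality is in hand the projector identities $\mathbf{A_N}(\mathbf{B}^T\mathbf{B})^\dagger=P_{\mathrm{Ker}\,\mathbf{C}}$ and $\mathbf{A_N}(\mathbf{C}^T\mathbf{C})^\dagger=P_{\mathrm{Ker}\,\mathbf{B}}$ follow exactly as you wrote.
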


\noindent
Secondly, for the Neumann Schur complement matrix we have the following explicit representation:
\begin{corollary}
\begin{equation}\label{sn}
    S_N = I^p - (\mathbf{1} \cdot \mathbf{1}^T),
\end{equation}
where $\mathbf{1} \in \mathcal{P}^h$ is the pressure constant defined in \eqref{pressure_constant}.
\end{corollary}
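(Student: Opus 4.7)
The plan is to substitute the direct-sum representation of $\mathbf{A_N}^{-1}$ from the previous corollary into $S_N=\mathbf{B}\mathbf{A_N}^{-1}\mathbf{B}^T$ and reduce to the orthogonal projector onto $\mathrm{Im}\,\mathbf{B}$. Writing
\begin{equation*}
S_N \;=\; \mathbf{B}\bigl[(\mathbf{B}^T\mathbf{B})^\dagger + (\mathbf{C}^T\mathbf{C})^\dagger\bigr]\mathbf{B}^T
\;=\; \mathbf{B}(\mathbf{B}^T\mathbf{B})^\dagger\mathbf{B}^T \;+\; \mathbf{B}(\mathbf{C}^T\mathbf{C})^\dagger\mathbf{B}^T,
\end{equation*}
I would deal with the two summands separately.

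For the curl term, I would argue it vanishes. Since $\mathbf{C}^T\mathbf{C}$ is symmetric positive semidefinite, its pseudoinverse has $\mathrm{Im}(\mathbf{C}^T\mathbf{C})^\dagger = \mathrm{Im}(\mathbf{C}^T\mathbf{C}) = \mathrm{Im}\,\mathbf{C}^T$. Hence $(\mathbf{C}^T\mathbf{C})^\dagger \mathbf{B}^T$ lands in $\mathrm{Im}\,\mathbf{C}^T$, and Lemma \ref{div_curl_lemma} (which gives $\mathbf{B}\mathbf{C}^T=0$) finishes the job: $\mathbf{B}(\mathbf{C}^T\mathbf{C})^\dagger\mathbf{B}^T=0$.

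For the divergence term, I would observe that $\mathbf{B}(\mathbf{B}^T\mathbf{B})^\dagger\mathbf{B}^T$ is exactly the orthogonal projector onto $\mathrm{Im}\,\mathbf{B}$. This is a standard SVD computation: if $\mathbf{B}=U\Sigma V^T$ is a thin SVD, then $(\mathbf{B}^T\mathbf{B})^\dagger = V\Sigma^{-2}V^T$ and the product telescopes to $UU^T$. Thus $S_N$ equals the orthogonal projector $P_{\mathrm{Im}\,\mathbf{B}}$ on $\mathcal{P}_h$, which by the standard orthogonal-complement identity equals $I^p - P_{\mathrm{Ker}\,\mathbf{B}^T}$.

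It then remains to identify $\mathrm{Ker}\,\mathbf{B}^T$. The proof of Theorem \ref{theorem:dhhd} already notes that the discrete pressure gradient $\mathbf{B}^T$ has a one-dimensional nullspace spanned by the unit-norm constant vector $\mathbf{1}$ from \eqref{pressure_constant}, so $P_{\mathrm{Ker}\,\mathbf{B}^T}=\mathbf{1}\mathbf{1}^T$ and the claim $S_N = I^p - \mathbf{1}\mathbf{1}^T$ follows. The only step that needs a bit of care is the vanishing of the curl term, where one has to correctly handle the image of the pseudoinverse of a singular matrix; everything else is a direct consequence of earlier lemmas and standard linear algebra.
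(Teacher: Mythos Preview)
Your proof is correct and follows essentially the same route as the paper: substitute the decomposition $\mathbf{A_N}^{-1}=(\mathbf{B}^T\mathbf{B})^\dagger+(\mathbf{C}^T\mathbf{C})^\dagger$, drop the curl contribution, and recognize $\mathbf{B}(\mathbf{B}^T\mathbf{B})^\dagger\mathbf{B}^T$ as the orthogonal projector onto $\mathrm{Im}\,\mathbf{B}=(\mathrm{Ker}\,\mathbf{B}^T)^\perp$. The only cosmetic differences are that the paper eliminates the curl term silently and writes the projector via the pseudoinverse identity $\mathbf{B}(\mathbf{B}^T\mathbf{B})^\dagger\mathbf{B}^T=\mathbf{B}\mathbf{B}^\dagger$, whereas you justify the vanishing explicitly through $\mathrm{Im}(\mathbf{C}^T\mathbf{C})^\dagger=\mathrm{Im}\,\mathbf{C}^T$ and compute the projector via an SVD; both are standard and equivalent.
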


\begin{proof}
Using decomposition \eqref{neumann_laplace_dhhd}, we have:
\begin{equation*}
\begin{split}
        S_N = \mathbf{B}\mathbf{A}_\mathbf{N}^{-1}\mathbf{B}^T = \mathbf{B}((\mathbf{B}^T\mathbf{B})^\dagger + (\mathbf{C}^T\mathbf{C})^\dagger)\mathbf{B}^T = \\
        = \mathbf{B}(\mathbf{B}^T\mathbf{B})^\dagger\mathbf{B}^T = \mathbf{B}\mathbf{B}^\dagger (\mathbf{B}\mathbf{B}^\dagger)^T = \\
    = (\mathbf{B}\mathbf{B}^\dagger)^2 = \mathbf{B}\mathbf{B}^\dagger,
    \end{split}
\end{equation*}
where the resulting matrix $\mathbf{B}\mathbf{B}^\dagger$ is, by definition, an orthogonal projector onto the kernel of the discrete gradient operator $\mathbf{B}^T$, which completes the proof.
\end{proof}

\noindent
Finally, for the inverse of $S_N$, by definition, we have:
\begin{corollary}
\begin{equation}\label{inverse_sn}
    S_N^\dagger = S_N^{\phantom{\dagger}}, \quad
    S_N^{\dagger} : \mathcal{P}_h \to \mathcal{P}_h.
\end{equation}
\end{corollary}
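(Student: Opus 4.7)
The plan is to exploit the fact, already established in the previous corollary, that $S_N = I^p - \mathbf{1}\mathbf{1}^T$ coincides with $\mathbf{B}\mathbf{B}^\dagger$, which is by construction an orthogonal projector. The statement $S_N^\dagger = S_N$ is then just the general fact that any orthogonal projector is its own Moore-Penrose pseudoinverse.

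Concretely, I would verify the two characterizing properties of an orthogonal projector directly from the explicit form \eqref{sn}. First, symmetry: $S_N^T = (I^p - \mathbf{1}\mathbf{1}^T)^T = I^p - \mathbf{1}\mathbf{1}^T = S_N$. Second, idempotence: using $\mathbf{1}^T\mathbf{1} = |\mathbf{1}|_2^2 = 1$ from \eqref{pressure_constant}, we get
\begin{equation*}
S_N^2 = (I^p - \mathbf{1}\mathbf{1}^T)(I^p - \mathbf{1}\mathbf{1}^T) = I^p - 2\,\mathbf{1}\mathbf{1}^T + \mathbf{1}(\mathbf{1}^T\mathbf{1})\mathbf{1}^T = I^p - \mathbf{1}\mathbf{1}^T = S_N.
\end{equation*}

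Given symmetry and idempotence, the four Penrose conditions
\begin{equation*}
S_N X S_N = S_N, \quad X S_N X = X, \quad (S_N X)^T = S_N X, \quad (X S_N)^T = X S_N
\end{equation*}
are all satisfied by the choice $X = S_N$, which by uniqueness of the Moore-Penrose pseudoinverse yields $S_N^\dagger = S_N$.

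There is no real obstacle here; the substantive work was done in establishing the closed form \eqref{sn}. The only thing to be careful about is the normalization $|\mathbf{1}|_2 = 1$ built into \eqref{pressure_constant}, which is exactly what makes $\mathbf{1}\mathbf{1}^T$ a genuine rank-one orthogonal projector onto the pressure nullspace and hence makes the idempotence computation collapse cleanly.
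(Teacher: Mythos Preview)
Your proof is correct and is essentially a fleshed-out version of the paper's argument, which simply states that the relation holds ``by definition'' after having identified $S_N$ with the orthogonal projector $\mathbf{B}\mathbf{B}^\dagger$. Your explicit verification of symmetry and idempotence from the closed form \eqref{sn}, together with the Penrose conditions, is exactly the content implicit in that remark.
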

\noindent
Thus, it has been shown that the Neumann Schur complement matrix $S_N$, as well as its inverse $S_N^\dagger $, equals the identity operator $I^p$ acting on the discrete pressure space $\mathcal{P}_h$, up to a one-dimensional kernel of the discrete pressure gradient operator $\mathbf{B}^T$.
\subsection{Structure of $S_D$.}\label{sec:schur_dirichlet}

Let us consider the following decomposition of the velocity perturbation matrix ${\mathbf{I^u_{\sim}}}$ defined in \eqref{perturbation_matrix}:
\begin{equation}\label{lr_pert}
    {\mathbf{I^u_{\sim}}} = \mathbf{U}^T\mathbf{U},
\end{equation}
where $\mathbf{U} \in \mathbb{R}^{r \times (\mathrm{n}(\mathrm{n}-1) + (\mathrm{n}-1)\mathrm{n})}$ is an operator which extracts the velocity nodes affected by the tangential boundary condition, and the rank $r$ is defined in \eqref{rank}. Then, the matrix $S_D = \mathbf{B}\mathbf{A}_\mathbf{D}^{-1}\mathbf{B}^T$, as well as its inverse $S_D^{\dagger}$, can be written as rank-$r$ perturbations of the identity matrices $S_N$ and $S_N^{\dagger}$ defined in \eqref{sn} and \eqref{inverse_sn}, respectively:
% \begin{equation}
%     \mathcal{U}^h \times \mathcal{V}^h = \mathrm{Ker} (\mathbf{B}) \oplus \mathrm{Ker} (\mathbf{C}).
% \end{equation}
\begin{theorem}
\begin{subequations}
\begin{align}
        S_D &= S_N - (\mathbf{U} \mathbf{B}^\dagger)^T (K_1)^{-1} (\mathbf{U} \mathbf{B}^\dagger), \label{sd_lr}\\
        S_D^{\dagger} &= S_N^{\dagger} + (\mathbf{U} \mathbf{B}^\dagger)^T (K_2)^{-1} (\mathbf{U} \mathbf{B}^\dagger), \label{sd_lr_inv}
\end{align}
\end{subequations}
where $r$-dimensional kernels $K_1, K_2 \in \mathbb{R}^{r \times r}$ are defined as follows:
\begin{equation*}
    \begin{split}
        K_1 &= \mathbf{U} (\frac{h^2}{2}{I^\mathbf{u}} +  (\mathbf{B}^{T}\mathbf{B})^{\dagger} +  (\mathbf{C}^{T}\mathbf{C})^{\dagger}) \mathbf{U}^T, \\ 
        K_2 &= \mathbf{U} ({\frac{h^2}{2}I^\mathbf{u}} + (\mathbf{C}^{T}\mathbf{C})^{\dagger}) \mathbf{U}^T.
    \end{split}
\end{equation*}
\end{theorem}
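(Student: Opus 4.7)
The plan is to prove both formulas by two successive applications of the Sherman--Morrison--Woodbury identity, once to $\mathbf{A_D}^{-1}$ and once to $(S_D + \mathbf{1}\mathbf{1}^T)^{-1}$. Since $\mathbf{A_N}$ is invertible (as noted in the proof of Theorem \ref{theorem:dhhd}) and $\mathbf{A_D} = \mathbf{A_N} + \tfrac{2}{h^2}\mathbf{U}^T\mathbf{U}$ is a rank-$r$ update of $\mathbf{A_N}$ by \eqref{laplacian_dirichlet} and \eqref{lr_pert}, Woodbury yields
\[
\mathbf{A_D}^{-1} = \mathbf{A_N}^{-1} - \mathbf{A_N}^{-1}\mathbf{U}^T\bigl(\tfrac{h^2}{2}I_r + \mathbf{U}\mathbf{A_N}^{-1}\mathbf{U}^T\bigr)^{-1}\mathbf{U}\mathbf{A_N}^{-1}.
\]
Conjugating by $\mathbf{B}$ on the left and $\mathbf{B}^T$ on the right recovers $S_N = \mathbf{B}\mathbf{A_N}^{-1}\mathbf{B}^T$ as the leading term and produces a rank-$r$ correction whose outer factor is $\mathbf{U}\mathbf{A_N}^{-1}\mathbf{B}^T$ and whose inner $r\times r$ kernel is $\tfrac{h^2}{2}\mathbf{U}\mathbf{U}^T + \mathbf{U}\mathbf{A_N}^{-1}\mathbf{U}^T$.

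To match the statement, I next simplify $\mathbf{U}\mathbf{A_N}^{-1}\mathbf{B}^T = \mathbf{U}\mathbf{B}^\dagger$. Substituting \eqref{neumann_laplace_dhhd},
\[
\mathbf{A_N}^{-1}\mathbf{B}^T = (\mathbf{B}^T\mathbf{B})^\dagger\mathbf{B}^T + (\mathbf{C}^T\mathbf{C})^\dagger\mathbf{B}^T.
\]
The first term equals $\mathbf{B}^\dagger$ by the standard pseudoinverse identity $M^\dagger = (M^TM)^\dagger M^T$. The second term vanishes because $\mathrm{Ker}((\mathbf{C}^T\mathbf{C})^\dagger) = \mathrm{Ker}\,\mathbf{C}$ contains $\mathrm{Im}\,\mathbf{B}^T$ by Lemma \ref{curl_grad_lemma}. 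Using $\mathbf{U}\mathbf{U}^T = I_r$ (since $\mathbf{U}$ is a selection matrix) and re-substituting \eqref{neumann_laplace_dhhd} into the inner kernel delivers \eqref{sd_lr} with the claimed $K_1$.

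For \eqref{sd_lr_inv}, the key observation is that $S_N$ and $S_D$ are both symmetric positive semidefinite with exactly the one-dimensional null space $\mathrm{span}\{\mathbf{1}\}$: this follows from $\mathbf{B}^T\mathbf{1} = \mathbf{0}$ and the fact that $\mathbf{A_D}, \mathbf{A_N}$ are SPD. For any such $M$ with $\|\mathbf{1}\|_2 = 1$, a direct check shows $(M + \mathbf{1}\mathbf{1}^T)^{-1} = M^\dagger + \mathbf{1}\mathbf{1}^T$; in particular $S_N^\dagger + \mathbf{1}\mathbf{1}^T = I^p$ by \eqref{sn}, and $S_D + \mathbf{1}\mathbf{1}^T = I^p - (\mathbf{U}\mathbf{B}^\dagger)^T K_1^{-1}(\mathbf{U}\mathbf{B}^\dagger)$ by the first formula. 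Applying Woodbury to this identity-plus-rank-$r$ expression gives
\[
(S_D + \mathbf{1}\mathbf{1}^T)^{-1} = I^p + (\mathbf{U}\mathbf{B}^\dagger)^T\bigl(K_1 - \mathbf{U}\mathbf{B}^\dagger(\mathbf{B}^\dagger)^T\mathbf{U}^T\bigr)^{-1}(\mathbf{U}\mathbf{B}^\dagger),
\]
and the pseudoinverse identity $\mathbf{B}^\dagger(\mathbf{B}^\dagger)^T = (\mathbf{B}^T\mathbf{B})^\dagger$ (which follows from an SVD of $\mathbf{B}$) collapses the bracketed matrix to exactly $K_2$. Subtracting $\mathbf{1}\mathbf{1}^T$ from both sides and recognising $I^p - \mathbf{1}\mathbf{1}^T = S_N = S_N^\dagger$ yields \eqref{sd_lr_inv}.

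The main obstacle I anticipate is the careful handling of the shared one-dimensional kernel when inverting $S_D$, since Woodbury does not apply verbatim to singular matrices. The $\mathbf{1}\mathbf{1}^T$-regularisation device must be justified, and in particular one must check that $\mathbf{B}^\dagger \mathbf{1} = \mathbf{0}$ so that the rank-$r$ correction preserves the null space on both sides of the identity; this follows because $\mathbf{1} \in \mathrm{Ker}\,\mathbf{B}^T = \mathrm{Ker}(\mathbf{B}\mathbf{B}^T)^\dagger$ together with $\mathbf{B}^\dagger = \mathbf{B}^T(\mathbf{B}\mathbf{B}^T)^\dagger$. Once this lifting is in place, every remaining step --- the vanishing of $(\mathbf{C}^T\mathbf{C})^\dagger\mathbf{B}^T$, the simplification $K_1 - \mathbf{U}(\mathbf{B}^T\mathbf{B})^\dagger\mathbf{U}^T = K_2$, and the two pseudoinverse identities --- is routine algebra.
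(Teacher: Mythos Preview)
Your proof is correct and follows essentially the same approach as the paper: a Woodbury step on $\mathbf{A_D}^{-1}$ to obtain \eqref{sd_lr}, the simplification $\mathbf{A_N}^{-1}\mathbf{B}^T=\mathbf{B}^\dagger$ via \eqref{neumann_laplace_dhhd}, and a second Woodbury step on the representation \eqref{sd_lr} to obtain \eqref{sd_lr_inv}. Your $\mathbf{1}\mathbf{1}^T$-regularisation device is in fact more careful than the paper, which simply says ``exploit the Sherman--Morrison--Woodbury formula once again'' without addressing the singularity of $S_D$; your lift to the invertible matrix $S_D+\mathbf{1}\mathbf{1}^T$ (together with the check $\mathbf{B}^\dagger\mathbf{1}=0$) makes that second application rigorous.
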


\begin{proof}
Taking into account the representation formula \eqref{laplacian_dirichlet} and the decomposition \eqref{lr_pert}, the Sherman–Morrison–Woodbury formula for the inverse of $\mathbf{A_D}$ gives:
    \begin{equation*}\label{ad_inv}
    \begin{split}
        \mathbf{A}_\mathbf{D}^{-1} &= \mathbf{A}_\mathbf{N}^{-1} - \mathbf{A}_\mathbf{N}^{-1}\mathbf{U}^T({\frac{h^2}{2}I^r} + \mathbf{U}\mathbf{A_N}\mathbf{U}^T)^{-1}\mathbf{U}\mathbf{A}_\mathbf{N}^{-1} = \\
        &= \mathbf{A}_\mathbf{N}^{-1} - (\mathbf{U}\mathbf{A}_\mathbf{N}^{-1})^T(K_1)^{-1}(\mathbf{U}\mathbf{A}_\mathbf{N}^{-1}).
    \end{split}
    \end{equation*}
    Then, for the Dirichlet Schur complement, we have:
    \begin{equation*}
         S_D = \mathbf{B}\mathbf{A}_\mathbf{D}^{-1}\mathbf{B}^T = \mathbf{B}\mathbf{A}_\mathbf{N}^{-1}\mathbf{B}^T - (\mathbf{U}\mathbf{A}_\mathbf{N}^{-1}\mathbf{B}^T)^T(K_1)^{-1}(\mathbf{U}\mathbf{A}_\mathbf{N}^{-1}\mathbf{B}^T),
    \end{equation*}
    which completes the proof of \eqref{sd_lr}, taking into account that: 
    \begin{equation*}
        \mathbf{A}_\mathbf{N}^{-1}\mathbf{B}^T = ((\mathbf{B}^T\mathbf{B})^\dagger + (\mathbf{C}^T\mathbf{C})^\dagger)\mathbf{B}^T = (\mathbf{B}^T\mathbf{B})^\dagger\mathbf{B}^T = \mathbf{B}^{\dagger}.
    \end{equation*}
    In order to proof \eqref{sd_lr_inv}, we exploit the Sherman–Morrison–Woodbury formula once again for the inverse of $S_D$ under the representation \eqref{sd_lr}.
\end{proof}

\section{Limiting case}\label{sec:limit}
Let us consider the limiting case, when the perturbation matrix $ {I^\mathbf{u}_{\sim}}$ defined in \eqref{perturbation_matrix} has full rank and coincides with the vector velocity identity matrix ${I^\mathbf{u}}$ defined in \eqref{velocity_identity}:
\begin{equation*}
    {I^\mathbf{u}_{\sim}} = {I^\mathbf{u}}.
\end{equation*}
Then, the inverse of the Dirichlet Schur complement matrix defined in \eqref{sd_lr_inv} is reduced as follows:
\begin{equation}
    S_D^{\dagger} = S_N^{\dagger} + \dfrac{2}{h^2}(\mathbf{B}\mathbf{B}^T)^\dagger,
\end{equation}
where the matrix $(\mathbf{B}\mathbf{B}^T):\mathcal{P}^h \to \mathcal{P}^h$ is the pressure Laplacian with the Neumann boundary conditions and the constant nullspace \eqref{pressure_constant}.

practical outcomes: This explains why SIMPLE preconditioner and the preconditioner presented by Frank work well for geometries with high s-t-v ratio.

\bibliographystyle{plain}      % basic style, author-year citations
\bibliography{main.bib}   % name your BibTeX data base

\end{document}